\newtheorem{theorem}{Theorem}[section]
\newtheorem{lemma}[theorem]{Lemma}
\newtheorem{definition}[theorem]{Definition}
\numberwithin{equation}{section}
\newenvironment{proof}[1][Proof]{\noindent\textbf{#1.} }{\hfill $\Box$}
 \makeatletter\setlength{\textwidth}{15.0cm}
\begin{document}
\author{Shuxia Pan\thanks{E-mail: shxpan@yeah.net.  Supported by NSF of Gansu
Province of China (1208RJYA004) and the Development Program for Outstanding Young Teachers in Lanzhou University of Technology (1010ZCX019).} and Jie Liu \\
{Department of Applied Mathematics, Lanzhou University of Technology,}\\
{Lanzhou, Gansu 730050, People's Republic of China}}
\title{\textbf{Existence of Bistable Waves in a Competitive Recursion System with Ricker Nonlinearity}} \maketitle
\begin{abstract}
Using an abstract scheme of monotone semiflows, the existence of bistable traveling wave solutions of a competitive recursion system with Ricker nonlinearity is established. The traveling wave solutions formulate the strong inter-specific actions between two competitive species.

\textbf{Keywords}: monotone semiflows; strong competition; spreading speed.

\textbf{AMS Subject Classification (2010)}: 37C65; 39A12; 92D25.
\end{abstract}

\section{Introduction}
\noindent

It is well known that many objects have multi stable steady states, e.g., water has stable steady states including the liquid  and the solid. Is there a transition process between different steady states? To model the process, we could use the index of traveling wave solutions. In particular, bistable traveling wave solutions can reflect the spatio-temporal processes of the transition, we refer to Volpert et al. \cite{volpert} for some examples in reaction-diffusion models. Besides the bistable traveling wave solutions in parabolic systems,  the bistable traveling wave solutions in other evolutionary systems are also important in describing some facts, for example, in lattice differential equations \cite{keener}. In literature, the existence of bistable traveling wave solutions of scalar equations has been widely studied, we refer to \cite{aron1,bates2,chen,fife,volpert} and the references cited therein.

For coupled systems, the existence of bistable traveling wave solutions is more difficult than that of scalar equations. For example, we cannot use the comparison principle of monotone semiflows in some coupled systems, but the comparison principle holds in many scalar equations. In some cooperative systems, the existence of traveling wave solutions has been proved if the corresponding traveling wave system is finite dimensional, see \cite{huang,mis,volpert} and the reference cited therein. When the traveling wave system is infinite dimensional, then the corresponding study is significantly different, e.g., it is difficult to utilize the technique of phase space analysis. Very recently, Fang and Zhao \cite{fang} have established an abstract scheme to prove the existence of bistable traveling wave solutions of evolutionary systems generating monotone semiflows by developing Yagisita \cite{y}. The theory can be applied to a system even if the corresponding traveling wave system is infinite dimensional. In particular, by the theory in \cite{fang}, Zhang and Zhao \cite{zhang1,zhang2} obtained the existence of bistable traveling wave solutions in some coupled systems.

In this paper, we shall consider the bistable traveling wave solutions of the following recursion system with Ricker nonlinearity
\begin{equation}\label{1}
\begin{cases}
U_{n+1}(x)=\int_{\mathbb{R}}
U_{n}(y)e^{r_{1}(1-U_{n}(y)-a_{1}V_{n}(y))}l_{1}(x-y)dy,\\
V_{n+1}(x)=\int_{\mathbb{R}}
V_{n}(y)e^{r_{2}(1-V_{n}(y)-a_{2}U_{n}(y))}l_{2}(x-y)dy,
\end{cases}
\end{equation}
where $r_1>0,r_2>0, x\in\mathbb{R},n\in\mathbb{N},$  $U_n(x), V_n(x)$ denote the density
of two competitor at time $n$
and in location $x\in \mathbb{R}$ in population dynamics, and  $l_i, i=1,2$ are probability functions describing the dispersal of individuals. For \eqref{1}, Wang and Castillo-Chavez \cite{wc} considered its monostable  traveling wave solutions and spreading speeds for $a_1<1<a_2$. Li and Li \cite{li} further studied the properties of monostable traveling wave solutions when  $a_1<1<a_2$ in \eqref{1}. If $a_1,a_2>1,$ then the corresponding difference system, namely,
\begin{equation}\label{2}
\begin{cases}
u_{n+1}=
u_{n}e^{r_{1}(1-u_{n}-a_{1}v_{n})},n\in\mathbb{N},\\
v_{n+1}=
v_{n}e^{r_{2}(1-v_{n}-a_{2}u_{n})},n\in\mathbb{N}
\end{cases}
\end{equation}
has four equilibria
\[
E_0=(0,0), E_1=(1,0), E_2=(0,1), E_3=(k_1,k_2)
\]
defined by
\[
k_1=\frac{1-a_1}{1-a_1a_2}, k_2=\frac{1-a_2}{1-a_1a_2}.
\]
In particular, if $r_1,r_2\in (0,1],$ then both $E_1$ and $E_2$ are stable while $E_0,E_3$ are unstable. For the global dynamics of \eqref{2},  see Cushing et al. \cite{cush}, Kang and Smith \cite{kang}, Li et al. \cite{lijia}, Sacker \cite{saker}, Sun et al. \cite{sun}.

When $E_1,E_2$ are stable in \eqref{2}, then a traveling wave solution connecting them is a bistable traveling wave solution of \eqref{1}, and a traveling wave solution connecting $E_0(\text{or }E_3)$ with $E_1(\text{or }E_2)$  is a monostable traveling wave solution of \eqref{1}, see \cite{li,wc}. In this paper, we shall prove the existence of bistable traveling wave solutions of \eqref{1} by the theory in Fang and Zhao \cite{fang}, during which the spreading speeds of several monostable subsystems of \eqref{1} are established by the results in Hsu and Zhao \cite{hsuzhao}, Liang and Zhao \cite{liang} and Weinberger et al. \cite{weinberger}.

\section{Preliminaries}
\noindent

In this paper, we shall use the standard partial ordering and ordering interval in $\mathbb{R}$ or $\mathbb{R}^2.$ Let $\mathcal{C}:=C(\mathbb{R},\mathbb{R}^2)$ be
\[
C(\mathbb{R},\mathbb{R}^2)=\{U | U:\mathbb{R}\to \mathbb{R}^2 \text{ is a uniformly continuous and bounded function}\}
\]
equipped with the standard compact open topology, namely,  a sequences $U_n\in \mathcal{C}$ converges to $U\in \mathcal{C}$ if and only if $U_n(x)\in \mathcal{C}$ converges to $U(x)\in \mathcal{C}$ uniformly in any compact subset of $x\in\mathbb{R}$. If $U=(u_1(x),u_2(x)),V=(v_1(x),v_2(x))\in \mathcal{C},$ then
\[
U\ge (\le) V \Leftrightarrow u_i(x) \ge (\le) v_i(x), i=1,2, x\in\mathbb{R},
\]
\[
U> (<) V \Leftrightarrow U\ge (\le) V \text{ but }u_i(x)\neq v_i(x) \text{ for some } i=1,2 \text{ and }x\in\mathbb{R},
\]
and
\[
U\gg (\ll) V \Leftrightarrow U\ge (\le) V \text{ and }u_i(x) > (<) v_i(x), i=1,2, x\in\mathbb{R}.
\]
Moreover, if ${A},{B}\in \mathbb{R}^2$ with ${A}\le {B},$ then
\[
\mathcal{C}_{[{A},{B}]}=\{U: U\in \mathcal{C}, {A}\le U (x)\le {B},x\in\mathbb{R}\}.
\]

To study the bistable traveling wave solutions of \eqref{1}, we shall impose the following assumptions in this paper:
\begin{description}
\item[(H1)] $r_1,r_2\in (0,1)$ and $a_1,a_2\in (1,\infty);$
\item[(H2)] $l_i$ is Lebesgue measurable and integrable such that $\int_{\mathbb{R}}l_i(y)dy=1$ and $\int_{\mathbb{R}}l_i(y)e^{\lambda y}dy<\infty$ for all $\lambda\in \mathbb{R}, i=1,2;$
\item[(H3)] $l_i(y)=l_i(-y)\ge 0, y\in \mathbb{R},i=1,2.$
\end{description}

To apply the theory of monotone semiflows, we make a change of variable
\[
U_{n}(x)=1-U^*_n(x),V_n(x)=V^*_n(x)
\]
and drop the star for the sake of simplicity, then \eqref{1} becomes
\begin{equation}\label{4}
\begin{cases}
U_{n+1}(x)=1-\int_{\mathbb{R}}
(1-U_{n}(y))e^{r_{1}(U_{n}(y)-a_{1}V_{n}(y))}l_{1}(x-y)dy,\\
V_{n+1}(x)=\int_{\mathbb{R}}
V_{n}(y)e^{r_{2}(1-a_2-V_{n}(y)+a_{2}U_{n}(y))}l_{2}(x-y)dy.
\end{cases}
\end{equation}
Then the corresponding difference system of \eqref{4} is
\begin{equation}\label{5}
\begin{cases}
u_{n+1}=
1-(1-u_{n})e^{r_{1}(u_{n}-a_{1}v_{n})},n\in\mathbb{N},\\
v_{n+1}=
v_{n}e^{r_{2}(1-a_2-v_{n}+a_{2}u_{n})},n\in\mathbb{N},
\end{cases}
\end{equation}
and \eqref{5} has four equilibria
\[
F_0=(0,0),F_1=(1,0), F_2=(1-k_1,k_2), F_3=(1,1).
\]
Clearly, $F_0$ and $F_3$ are stable while $F_1,F_2$ are unstable. Then it suffices to study the bistable traveling wave solutions of \eqref{4} connecting $F_0$ with $F_3$. We now give the definition of traveling wave solutions as follows.
\begin{definition}{\rm
A traveling wave solution of \eqref{1} is a special solution
of the form $U_n(x)=\phi (t), V_n(x)=\psi(t), t=x+cn$ with the wave
speed $c\in \mathbb{R}$ and the wave profile $(\phi,\psi)\in \mathcal{C}$. Then
$(\phi,\psi)$ and $c$ must satisfy the following recursion system
\begin{equation}\label{6}
\begin{cases}
\phi (t+c)=1-\int_{\mathbb{R}}(1-\phi (y))e^{r_{1}(\phi (y)-a_{1}\psi(y))}l_{1}(t-y)dy,\\
\psi
(t+c)=\int_{\mathbb{R}} \psi (y)e^{r_{2}(1-a_2-\psi (y)+a_{2}\phi (y))}l_{2}(t-y)dy, t\in\mathbb{R}.
\end{cases}
\end{equation}}
\end{definition}

For a bistable traveling wave solution $(\phi,\psi)$, it satisfies
\begin{equation}\label{7}
\lim_{t\to -\infty}(\phi(t),\psi(t))=(0,0)=:\theta,\lim_{t\to \infty}(\phi(t),\psi(t))=(1,1)=:\textbf{1}.
\end{equation}

In what follows, we shall investigate the existence of \eqref{6}-\eqref{7} by the theory in Fang and Zhao \cite{fang}.
We first recall the results of \cite[Theorem 3.1]{fang}. Let ${\theta}\ll {M}\in\mathbb{R}^2$ and $Q$ be a map from $\mathcal{C}_{[\theta,{M}]}$ to $\mathcal{C}_{[\theta,{M}]}$ with $Q({\theta})={\theta},Q({M})={M}.$ Also let $F$ be the set of all spatially homogeneous steady states of $Q$ restricted on $[\theta,{M}].$ We now list the conditions of \cite[Theorem 3.1]{fang} as follows.
\begin{description}
\item[(A1)] (\emph{Transition invariance}) $T_y\circ Q[\Phi]=Q\circ T_y[\Phi]$ for any $\Phi \in \mathcal{C}_{[\theta,{M}]}$ and $y\in\mathbb{R},$ where $T_y[\Phi](x)=\Phi(x-y);$
\item[(A2)] (\emph{Continuity}) $Q: \mathcal{C}_{[\theta,{M}]} \to \mathcal{C}_{[\theta,{M}]}$ is continuous with respect to the compact open topology;
\item[(A3)] (\emph{Monotonicity}) $Q$ is order preserving in the sense that $Q[\Phi]\ge Q[\Psi]$ if $\Phi \ge \Psi$ with $\Phi, \Psi \in \mathcal{C}_{[\theta,{M}]};$
\item[(A4)] (\emph{Compactness}) $Q:\mathcal{C}_{[\theta,{M}]}\to \mathcal{C}_{[\theta,{M}]}$ is compact with respect to the compact open topology;
\item[(A5)] (\emph{Bistability}) Two fixed points $\theta$ and ${M}$ are strongly stable from above and below, respectively,
for the map $Q: [\theta,{M}] \to [\theta,{M}],$ that is, there exist a number $\delta >0$ and unit vectors ${E}_4,{E}_5$ with $\theta \ll {E}_4,{E}_5\ll {1}$ such that
\[
Q[\eta {E}_4] \ll \eta {E}_4, Q[{M}-\eta {E}_5] \gg {M}-\eta {E}_5, \forall \eta \in (0, \delta],
\]
and the set $F\backslash \{\theta,{M}\}$ is totally unordered;
\item[(A6)] (\emph{Counter-propagation}) For each ${I}\in F\backslash \{\theta,{M}\},$ $c^*_- ({I}, {M} )+c^*_+(\theta, {I})>0,$ where $c^*_- ({I}, {M} )$ and $c^*_+(\theta, {I})$ represent the leftward and rightward spreading speeds of monostable subsystem $\{Q^n\}_{n\ge 0}$ restricted on $\mathcal{C}_{[{I}, {M} ]}$ and $\mathcal{C}_{[\theta, {I} ]},$ respectively.
\end{description}

In Fang and Zhao \cite{fang}, under the assumptions (A1)-(A6), the existence of bistable traveling wave solutions of $\{Q_n\}_{n\ge 0}$ connecting $\theta$ with $M$ has been proved, which is monotone increasing.

\section{Existence of Bistable Traveling Wave Solutions}

\noindent

In this section, we consider the existence of \eqref{6}-\eqref{7}. We first present the main conclusion of this paper as follows.
\begin{theorem}\label{th1}
Assume that (H1)-(H3) hold. Then there exist $c\in\mathbb{R}$ and $(\phi,\psi)\in \mathcal{C}_{[\theta,\textbf{1}]}$ satisfying \eqref{6}-\eqref{7}, which is monotone increasing and is a desired bistable traveling wave solution of \eqref{4}.
\end{theorem}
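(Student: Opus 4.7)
The plan is to apply \cite[Theorem 3.1]{fang} to the solution map $Q:\mathcal{C}_{[\theta,\mathbf{1}]}\to\mathcal{C}_{[\theta,\mathbf{1}]}$ defined by the right hand side of \eqref{4}, with $M=\mathbf{1}$; once a monotone bistable wave of \eqref{4} connecting $\theta$ and $\mathbf{1}$ is produced, undoing the change of variable $U\mapsto 1-U^{*}$ supplies the corresponding wave of \eqref{1}. The whole work is therefore the verification of (A1)--(A6) for this $Q$.

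I would dispatch (A1), (A2) and (A4) first as routine analytic properties of the convolution operator. Translation invariance (A1) is immediate from the integral form of \eqref{4}. Continuity in the compact-open topology (A2) follows from dominated convergence using the uniform bound on $\mathcal{C}_{[\theta,\mathbf{1}]}$ and the integrability in (H2). For compactness (A4), convolution with the $L^{1}$ kernel $l_i$ regularizes iterates to be equicontinuous on every compact subset of $\mathbb{R}$ and, combined with the uniform bound by $\mathbf{1}$, yields compactness via Arzel\`a--Ascoli.

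The structural content lies in (A3) and (A5). Monotonicity reduces to verifying that the scalars $f_1(u,v)=1-(1-u)e^{r_1(u-a_1v)}$ and $f_2(u,v)=v e^{r_2(1-a_2-v+a_2u)}$ are nondecreasing in each variable on $[0,1]^{2}$: monotonicity in the cross variable is automatic since $a_i>0$, while $\partial_{u}f_{1}=(1-r_{1}(1-u))e^{r_{1}(u-a_{1}v)}$ and $\partial_{v}f_{2}=(1-r_{2}v)e^{r_{2}(1-a_{2}-v+a_{2}u)}$ are nonnegative precisely because (H1) gives $r_i\in(0,1)$. For (A5), the spatially homogeneous equilibria of $Q$ on $[\theta,\mathbf{1}]$ are exactly $F_0,F_1,F_2,F_3$ of \eqref{5}; (H1) forces $k_1,k_2\in(0,1)$, so the intermediate equilibria $F_{1}=(1,0)$ and $F_{2}=(1-k_{1},k_{2})$ are componentwise unordered, whence $F\setminus\{\theta,\mathbf{1}\}$ is totally unordered. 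Strong stability of $\theta$ from above and of $\mathbf{1}$ from below I would deduce by linearizing \eqref{5} at $F_{0}$ and $F_{3}$: the resulting Jacobians have spectral radius strictly less than one under (H1), producing the required unit vectors $E_{4},E_{5}$.

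The main obstacle is (A6), the counter-propagation inequality. For each $I\in\{F_{1},F_{2}\}$, the restrictions of $Q$ to $\mathcal{C}_{[I,\mathbf{1}]}$ and to $\mathcal{C}_{[\theta,I]}$ define monotone monostable subsystems whose spreading speeds $c^{*}_{-}(I,\mathbf{1})$ and $c^{*}_{+}(\theta,I)$ I would compute by \cite{hsuzhao,liang,weinberger}; the symmetry assumption (H3) makes the leftward and rightward speeds equal, reducing the task to two scalar sign conditions. By linear determinacy for the cooperative subsystems and the bilateral moment condition in (H2), these speeds reduce to explicit infima over $\lambda>0$ of quantities built from the Laplace transforms $\int l_{i}(y)e^{\lambda y}dy$ and from the linearizations of $f_{1},f_{2}$ at $F_{0},F_{1},F_{2},F_{3}$; the delicate step is to verify that both $c^{*}_{-}(F_{1},\mathbf{1})+c^{*}_{+}(\theta,F_{1})$ and $c^{*}_{-}(F_{2},\mathbf{1})+c^{*}_{+}(\theta,F_{2})$ are strictly positive under (H1). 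Once (A1)--(A6) are in hand, \cite[Theorem 3.1]{fang} yields the monotone increasing profile $(\phi,\psi)$ and wave speed $c$ asserted by Theorem \ref{th1}.
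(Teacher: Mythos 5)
Your overall strategy coincides with the paper's: define $Q$ by the right-hand side of \eqref{4}, verify (A1)--(A6), and invoke \cite[Theorem 3.1]{fang}. Your treatment of (A1)--(A5) matches the paper's Lemmas \ref{le2}--\ref{le4} in substance (your derivative computations $\partial_u f_1=(1-r_1(1-u))e^{r_1(u-a_1v)}\ge 0$ and $\partial_v f_2=(1-r_2v)e^{r_2(1-a_2-v+a_2u)}\ge 0$ are exactly the point where $r_i\in(0,1)$ enters, and your unorderedness argument for $F_1,F_2$ via $k_1,k_2\in(0,1)$ is the same as Lemma \ref{le4}).

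However, there is a genuine gap at (A6), which is precisely the technical heart of the paper (Lemmas \ref{le5} and \ref{le6}): you state that "the delicate step is to verify" the two positivity conditions, but you never verify them, and you do not supply the two ideas that make the verification go through. First, for $I=F_1=(1,0)$ the restricted subsystems on $\mathcal{C}_{[F_1,F_3]}$ and $\mathcal{C}_{[F_0,F_1]}$ \emph{decouple}: one component is frozen at its equilibrium value, so each reduces to a scalar Ricker recursion $p_{n+1}=\int p_n(y)e^{r_i(1-p_n(y))}l_i(x-y)\,dy$, whose spreading speed $\inf_{\mu>0}\mu^{-1}\ln\bigl(e^{r_i}\int e^{\mu y}l_i(y)\,dy\bigr)$ is strictly positive by \cite[Theorem 2.1]{hsuzhao} and \cite[Lemma 3.8]{liang}. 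Second, for $I=F_2=(1-k_1,k_2)$ the restricted subsystems are genuinely two-dimensional cooperative systems, and one must bound the speed from below by $\inf_{\mu>0}\mu^{-1}\ln\lambda(B_\mu)$, where $\lambda(B_\mu)$ is the principal eigenvalue of the linearization at $F_2$; the paper's determinant computation shows $\lambda(B_\mu)>\int e^{\mu y}l_1(y)\,dy>1$, and the strict inequality hinges on the sign of $1-a_1a_2<0$, i.e.\ on the strong-competition hypothesis $a_1,a_2>1$. Without these two steps the counter-propagation condition is only asserted, not proved, so the proposal as written does not yet establish Theorem \ref{th1}. (A minor additional remark: the paper actually proves each of the four speeds is individually positive, which is stronger than the sum conditions you reduce to.)
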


To prove Theorem \ref{th1}, we  define
$Q=(Q_1,Q_2)$ by
\begin{equation}\label{8}
\begin{cases}
Q_1(\phi,\psi)(t)=1-\int_{\mathbb{R}}(1-\phi (y))e^{r_{1}(\phi (y)-a_{1}Y_{n}(y))}l_{1}(t-y)dy,\\
Q_2(\phi,\psi)(t)=\int_{\mathbb{R}} \psi (y)e^{r_{2}(1-a_2-\psi (y)+a_{2}\phi (y))}l_{2}(t-y)dy
\end{cases}
\end{equation}
for $\Phi=(\phi,\psi)\in \mathcal{C}_{[\theta,\textbf{1}]}.$

By Fang and Zhao \cite[Theorem 3.1]{fang}, it suffices to verify that (A1)-(A6) to prove Theorem  \ref{th1}, and we now check them by several lemmas, throughout which we always impose (H1)-(H3).

\begin{lemma}\label{le1}
If $Q$ is defined by \eqref{8}, then ${M}=(1,1)$ and
\[
F=\{F_0, F_1, F_2, F_3\}.
\]
\end{lemma}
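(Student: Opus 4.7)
The plan is to reduce the lemma to a purely algebraic system by exploiting that a constant profile $\Phi(t)\equiv(u,v)$ can be pulled outside the convolution integrals in \eqref{8}. Using $\int_{\mathbb{R}} l_i(y)\,dy=1$ from (H2), the equation $Q(u,v)=(u,v)$ becomes
\begin{equation*}
(1-u) = (1-u)\,e^{r_{1}(u-a_{1}v)},\qquad v = v\,e^{r_{2}(1-a_{2}-v+a_{2}u)},
\end{equation*}
which is exactly the fixed-point system for the difference map \eqref{5}. So $F$ coincides with the set of equilibria of \eqref{5} lying in $[\theta,M]$.

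Next, I would split each equation into its two possibilities: from the first, either $u=1$ or the exponent vanishes, giving $u=a_{1}v$; from the second, either $v=0$ or $v = 1 - a_2(1-u)$. The four resulting combinations produce, respectively, $(1,0)=F_{1}$, $(1,1)=F_{3}$, $(0,0)=F_{0}$, and for the last case substitute $u=a_{1}v$ into $v=1-a_{2}+a_{2}u$ to get the linear equation $v(1-a_{1}a_{2})=1-a_{2}$, hence $v=k_{2}$ and $u=a_{1}k_{2}$. The key observation to make here is the algebraic identity
\begin{equation*}
a_{1}k_{2} \;=\; \frac{a_{1}(1-a_{2})}{1-a_{1}a_{2}} \;=\; \frac{(1-a_{1}a_{2}) - (1-a_{1})}{1-a_{1}a_{2}} \;=\; 1-k_{1},
\end{equation*}
which identifies the fourth solution with $F_{2}=(1-k_{1},k_{2})$.

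Then I would check that all four points lie in $[\theta, M]:=[(0,0),(1,1)]$ under (H1). Since $a_{1},a_{2}>1$, the numerators $1-a_{i}$ and the denominator $1-a_{1}a_{2}$ are all negative, so $k_{1},k_{2}>0$; rewriting $k_{i}=(a_{i}-1)/(a_{1}a_{2}-1)$ and using $a_{j}>1$ for $j\neq i$ shows $k_{i}<1$, so that $1-k_{1}\in(0,1)$ and $k_{2}\in(0,1)$. Finally, the claim $M=(1,1)$ is justified by the fact that $F_{3}=(1,1)$ is itself a homogeneous fixed point, so $Q(M)=M$ as required by the Fang--Zhao framework, while $Q(\theta)=\theta$ follows either from the computation above or directly from \eqref{8}.

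I do not expect any real obstacle; the proof is essentially a case analysis of a factored pair of equations. The only non-mechanical step is noticing the identity $a_{1}k_{2}=1-k_{1}$, which matches the last case cleanly with the equilibrium $F_{2}$ defined in Section~1.
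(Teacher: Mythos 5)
Your proposal is correct: reducing $Q(u,v)=(u,v)$ for constant profiles to the fixed-point system of \eqref{5} via $\int_{\mathbb{R}}l_i(y)\,dy=1$, factoring each equation, and checking the identity $a_1k_2=1-k_1$ together with $k_1,k_2\in(0,1)$ is exactly the verification needed. The paper states this lemma without proof, so your argument simply supplies the routine computation the authors omitted; there is no gap.
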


\begin{lemma}\label{le2}
If $Q$ is defined by \eqref{8}, then it satisfies (A1).
\end{lemma}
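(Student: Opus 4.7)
The plan is to verify (A1) by a direct change of variables, exploiting the fact that the two components of $Q$ are both convolution-type operators with kernels $l_i$ depending only on differences $t-y$, a structure which is intrinsically translation invariant under Lebesgue measure.

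First, I would fix an arbitrary $\Phi=(\phi,\psi)\in\mathcal{C}_{[\theta,\mathbf{1}]}$ and $y\in\mathbb{R}$, and unfold the definitions on both sides. On the left-hand side, $T_y\circ Q[\Phi](t)=Q[\Phi](t-y)$, which for the second component is simply
\[
\int_{\mathbb{R}}\psi(s)e^{r_2(1-a_2-\psi(s)+a_2\phi(s))}l_2(t-y-s)\,ds,
\]
and analogously for $Q_1$. On the right-hand side, writing $\Phi_y:=T_y\Phi=(\phi(\cdot-y),\psi(\cdot-y))$, I would compute $Q[\Phi_y](t)$ and then apply the substitution $s=w+y$ in the integral, noting that $ds=dw$ and that the kernel term becomes $l_2(t-s)=l_2(t-y-w)$. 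The integrand collapses to $\psi(w)e^{r_2(1-a_2-\psi(w)+a_2\phi(w))}l_2(t-y-w)$, which is exactly the integrand appearing on the left-hand side (after renaming $w$ back to $s$). The identical argument, with $l_1$ in place of $l_2$ and the bracket $[1-\cdot]$ kept outside the integral, handles $Q_1$.

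There is no genuine obstacle here: the substitution $s\mapsto s+y$ is a measure-preserving bijection of $\mathbb{R}$, and by (H2) each $l_i$ is an $L^1$ density, so the integrals are absolutely convergent and the change of variables is legitimate for every $\Phi\in\mathcal{C}_{[\theta,\mathbf{1}]}$ and every $t\in\mathbb{R}$. One only needs to be careful to carry the constant $1$ and the outer sign in the definition of $Q_1$ through the computation unchanged, since they do not depend on $t$ and are therefore unaffected by the translation. Combining the two identities componentwise yields $T_y\circ Q[\Phi]=Q\circ T_y[\Phi]$, which is (A1). \hfill $\Box$
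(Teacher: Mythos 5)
Your proposal is correct and follows essentially the same route as the paper: both verify (A1) for each component by unfolding the convolution integrals and performing the translation change of variables $s\mapsto s+y$, which is legitimate since $l_i\in L^1$ by (H2). No gaps.
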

\begin{proof}
For any $y\in \mathbb{R}$ and $(\phi,\psi)\in \mathcal{C}_{[\theta,\textbf{1}]},$ we have
\begin{eqnarray*}
T_{y}[Q_{2}(\phi ,\psi )(t)] &=&T_{y}\left[ \int_{\mathbb{R}}\psi
(s)e^{r_{2}(1-a_{2}-\psi (s)+a_{2}\phi (s))}l_{2}(t-s)ds\right]  \\
&=&T_{y}\left[ \int_{\mathbb{R}}\psi (t-s)e^{r_{2}(1-a_{2}-\psi
(t-s)+a_{2}\phi (t-s))}l_{2}(s)ds\right]  \\
&=&\int_{\mathbb{R}}\psi (t-y-s)e^{r_{2}(1-a_{2}-\psi (t-y-s)+a_{2}\phi
(t-y-s))}l_{2}(s)ds \\
&=&\int_{\mathbb{R}}T_{y}[\psi (t-s)]e^{r_{2}(1-a_{2}-T_{y}[\psi
(t-s)]+a_{2}T_{y}[\phi (t-s)])}l_{2}(s)ds \\
&=&Q_{2}(T_{y}[\phi ],T_{y}[\psi ])(t).
\end{eqnarray*}

Similarly, we obtain $T_{y}[Q_{1}(\phi ,\psi )(t)]=Q_{1}(T_{y}[\phi
],T_{y}[\psi ])(t)$. The proof is complete.
\end{proof}

\begin{lemma}\label{le3}
If $Q$ is defined by \eqref{8}, then $Q:\mathcal{C}_{[\theta,\textbf{1}]}\to \mathcal{C}_{[\theta,\textbf{1}]}$ and
satisfies (A2)-(A4).
\end{lemma}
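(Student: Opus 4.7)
The plan is to verify the four claims in sequence: invariance of $\mathcal{C}_{[\theta,\textbf{1}]}$, continuity, monotonicity, and compactness. The essential mileage comes from assumption (H1), which forces $r_i<1$; together with $a_i>1$, this keeps the one-step Ricker maps well-behaved.

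First I would establish the invariance $Q(\mathcal{C}_{[\theta,\textbf{1}]})\subset\mathcal{C}_{[\theta,\textbf{1}]}$. For $Q_2$, fix $u,v\in[0,1]$ and consider $g(v)=v\,e^{r_2(1-a_2-v+a_2u)}$. Since $g'(v)=(1-r_2v)e^{\cdots}$ and $1/r_2>1$, $g$ is increasing on $[0,1]$, so $g(v)\le g(1)=e^{r_2 a_2(u-1)}\le 1$. Thus $Q_2(\phi,\psi)\in[0,1]$. For $Q_1$, with $h(u)=(1-u)e^{r_1(u-a_1v)}$ we have $h'(u)=e^{\cdots}(-1+r_1(1-u))<0$ since $r_1(1-u)\le r_1<1$, so $h$ is decreasing on $[0,1]$ and $h(u)\le h(0)=e^{-r_1a_1v}\le 1$; hence $0\le 1-\int(1-\phi)e^{r_1(\phi-a_1\psi)}l_1\,dy\le 1$. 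Uniform continuity of $Q(\phi,\psi)$ follows from (H2) and the $L^1$-continuity of translation.

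Next I would verify monotonicity (A3) by computing partial derivatives of the integrands with respect to $\phi$ and $\psi$. For $Q_2$, $\partial_\psi[\psi e^{r_2(1-a_2-\psi+a_2\phi)}]=(1-r_2\psi)e^{\cdots}\ge0$ (using $r_2<1$), and $\partial_\phi[\cdots]=r_2a_2\psi e^{\cdots}\ge 0$; so $Q_2$ is increasing in both arguments. For $Q_1$, $\partial_\phi[(1-\phi)e^{r_1(\phi-a_1\psi)}]=(r_1(1-\phi)-1)e^{\cdots}\le 0$ (again using $r_1<1$), so the minus sign in front of the integral makes $Q_1$ increasing in $\phi$; similarly $\partial_\psi[\cdots]=-r_1a_1(1-\phi)e^{\cdots}\le 0$, which again flips to give monotonicity in $\psi$. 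This is exactly why the change of variable $U\mapsto 1-U^*$ was introduced: it turns the competitive system into a monotone one on $[\theta,\textbf{1}]$.

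For continuity (A2), I would fix a compact $K\subset\mathbb{R}$ and $\varepsilon>0$, and split the convolution integral. Given $\Phi_n\to\Phi$ in the compact-open topology, choose a large $R$ so that $\int_{|y-t|\ge R}l_i(t-y)\,dy<\varepsilon$ uniformly for $t\in K$ (using (H2)); on the bounded window $|y-t|\le R$, $\Phi_n\to\Phi$ uniformly and the integrands are Lipschitz in their arguments because all quantities stay in a compact set of $[0,1]^2$. Standard estimation gives $\|Q(\Phi_n)-Q(\Phi)\|_{L^\infty(K)}\to 0$.

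Finally, for compactness (A4), I would invoke Arzelà--Ascoli. Uniform boundedness on $\mathcal{C}_{[\theta,\textbf{1}]}$ is already in hand. For equicontinuity, the estimate
\begin{equation*}
|Q_i(\phi,\psi)(t_1)-Q_i(\phi,\psi)(t_2)|\le C\int_{\mathbb{R}}|l_i(t_1-y)-l_i(t_2-y)|\,dy
\end{equation*}
with $C$ an $L^\infty$ bound on the Ricker integrand on $[0,1]^2$, combined with the $L^1$-continuity of translations of $l_i\in L^1(\mathbb{R})$, yields equicontinuity uniform in $(\phi,\psi)\in\mathcal{C}_{[\theta,\textbf{1}]}$. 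A diagonal subsequence argument then gives precompactness in the compact-open topology. The main obstacle in the whole lemma is really bookkeeping: all four properties are routine, but one must be careful with (A3) to exploit $r_i<1$ in both signs (the partial derivatives only have the right sign for values in $[0,1]$, which is precisely why step one must be completed first).
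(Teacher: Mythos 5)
Your proposal is correct and follows essentially the same route as the paper: invariance and monotonicity come from the sign of the partial derivatives of the one-step Ricker maps on $[0,1]^2$ (using $r_i<1$), and continuity/compactness come from the uniform equicontinuity estimate $|Q_i(\Phi)(t_1)-Q_i(\Phi)(t_2)|\le C\int_{\mathbb{R}}|l_i(t_1-y)-l_i(t_2-y)|\,dy$ together with the $L^1$-continuity of translation and Arzel\`a--Ascoli. The paper merely states that (A2)--(A4) are ``clear'' after these observations, so your write-up supplies the same argument with the routine details made explicit.
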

\begin{proof}
For any $t,\delta$ and $(\phi,\psi)\in \mathcal{C}_{[\theta,\textbf{1}]},$ we have
\begin{eqnarray}
&&\left\vert Q_{1}(\phi ,\psi )(t+\delta )-Q_{1}(\phi ,\psi )(t)\right\vert
\nonumber \\
&=&\left\vert \int_{\mathbb{R}}\psi (s)e^{r_{2}(1-a_{2}-\psi (s)+a_{2}\phi
(s))}\left[ l_{2}(t+\delta -s)-l_{2}(t-s)\right] ds\right\vert   \nonumber \\
&\leq &\int_{\mathbb{R}}\psi (s)e^{r_{2}(1-a_{2}-\psi (s)+a_{2}\phi
(s))}\left\vert l_{2}(t+\delta -s)-l_{2}(t-s)\right\vert ds  \nonumber \\
&\leq &\int_{\mathbb{R}}\left\vert l_{2}(t+\delta -s)-l_{2}(t-s)\right\vert
ds,  \label{9}
\end{eqnarray}
which implies the equicontinuity of $Q(\phi,\psi)(t)$ by (H2).

Since $r_{2}\in (0,1),$ we know that $ue^{r_{2}(1-a_{2}-u+a_{2}v)}$ is
monotone increasing in both $u\in \lbrack 0,1]$ and $v\in \lbrack 0,1]$ such
that
\[
0\leq ue^{r_{2}(1-a_{2}-u+a_{2}v)}\leq 1,u\in \lbrack 0,1],v\in \lbrack 0,1],
\]%
which further implies that%
\begin{eqnarray*}
0 &=&\int_{\mathbb{R}}0\cdot l_{2}(t-s)ds \\
&\leq &\int_{\mathbb{R}}\psi (s)e^{r_{2}(1-a_{2}-\psi (s)+a_{2}\phi
(s))}l_{2}(t-s)ds \\
&\leq &\int_{\mathbb{R}}1\cdot l_{2}(t-s)ds=1
\end{eqnarray*}%
for any $(\phi,\psi)\in \mathcal{C}_{[\theta,\textbf{1}]}.$ By a similar analysis of $Q_1,$ we can prove that
$Q:\mathcal{C}_{[\theta,\textbf{1}]}\to \mathcal{C}_{[\theta,\textbf{1}]}.$

Due to the continuity and monotonicity of
\[
ue^{r_{2}(1-a_{2}-u+a_{2}v)}, 1-(1-u)e^{r_{1}(u-a_{1}v)}, u,v\in [0,1],
\]
and the verification of \eqref{9}, (A2)-(A4) are clear and we omit the proof here. The proof is complete.
\end{proof}

\begin{lemma}\label{le4}
(A5) is true.
\end{lemma}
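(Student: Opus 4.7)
The plan is to verify the two ingredients of (A5) separately: the total unorderedness of $F\setminus\{\theta,\mathbf{1}\}$, and the strong stability of the two endpoints.

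The unorderedness part is immediate from (H1). With $a_1,a_2>1$ both $1-a_1$ and $1-a_1a_2$ are negative and satisfy $a_1-1<a_1a_2-1$, so $k_1\in(0,1)$, and the same argument gives $k_2\in(0,1)$. Thus $F_2=(1-k_1,k_2)$ has both coordinates strictly in $(0,1)$, while $F_1=(1,0)$ has strictly larger first coordinate and strictly smaller second, so the two are incomparable. Hence $F\setminus\{\theta,\mathbf{1}\}=\{F_1,F_2\}$ is totally unordered.

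For the strong-stability clause, the decisive observation is that $\int_\mathbb{R} l_i(y)\,dy=1$ implies that $Q$ sends constants to constants and, restricted to constants, coincides with the local map $F(u,v)=(f(u,v),g(u,v))$ of system \eqref{5}. Hence it suffices to produce unit vectors $E_4,E_5\gg\theta$ and a number $\delta>0$ with $F(\eta E_4)\ll\eta E_4$ and $F(\mathbf{1}-\eta E_5)\gg\mathbf{1}-\eta E_5$ for every $\eta\in(0,\delta]$. Direct differentiation gives
\begin{equation*}
DF(F_0)=\begin{pmatrix} 1-r_1 & a_1r_1 \\ 0 & e^{r_2(1-a_2)} \end{pmatrix},\qquad DF(F_3)=\begin{pmatrix} e^{r_1(1-a_1)} & 0 \\ a_2r_2 & 1-r_2 \end{pmatrix},
\end{equation*}
both of which are nonnegative and have spectral radius strictly below $1$ under (H1), but each carries one large off-diagonal entry (namely $a_1r_1$ at $F_0$ and $a_2r_2$ at $F_3$) encoding the destabilizing cross-coupling of the competition.

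To compensate for this coupling I choose $E_4$ proportional to $(2a_1,1)$ (so its first component exceeds $a_1$ times its second) and $E_5$ proportional to $(1,2a_2)$ (so its second exceeds $a_2$ times its first), each normalized to unit length. With these choices the linear parts of $F(\eta E_4)-\eta E_4$ and of $F(\mathbf{1}-\eta E_5)-(\mathbf{1}-\eta E_5)$ are componentwise strictly negative and strictly positive, respectively. A Taylor expansion at $F_0$ and $F_3$ then shows the quadratic remainders are absorbed once $\delta>0$ is small enough, yielding the required strict inequalities. The only delicate point is the choice of $E_4$ and $E_5$: a symmetric pick such as $(1,1)/\sqrt{2}$ would fail because the resulting linear correction at $F_0$ is $(1-r_1+a_1r_1)\eta/\sqrt{2}>\eta/\sqrt{2}$ in the first component, pointing in the wrong direction, and analogously at $F_3$. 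Once one recognises that $E_4$ must be tilted toward the $u$-axis and $E_5$ toward the $v$-axis, the rest is a routine Taylor estimate and I anticipate no further obstacle.
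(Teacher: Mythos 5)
Your proof is correct and follows the same basic route as the paper's: both reduce (A5) to the spatially homogeneous setting, i.e.\ to the stability of the equilibria of the difference system \eqref{5}, together with the observation that $F_1=(1,0)$ and $F_2=(1-k_1,k_2)$ are unordered. The difference is only in completeness: the paper dismisses the stability clause with ``the stability result is clear by that of \eqref{2}, and we omit the details here,'' whereas you actually compute $DF(F_0)$ and $DF(F_3)$ and exhibit the tilted unit vectors $E_4\propto(2a_1,1)$, $E_5\propto(1,2a_2)$ needed to beat the off-diagonal entries $a_1r_1$ and $a_2r_2$ --- your remark that the symmetric choice $(1,1)/\sqrt{2}$ fails is precisely the nontrivial point the paper's one-line argument hides, and your verification of it is correct.
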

\begin{proof}
Since $Q$ is order preserving, it suffices to consider the initial value problem when the initial value of \eqref{4} is spatially homogeneous, namely, the stability of equilibria of \eqref{5}. Then the stability result is clear by that of \eqref{2}, and we omit the details here. Moreover, $F_1$ and $F_2$ are unordered. The proof is complete.
\end{proof}

\begin{lemma}\label{le5}
$c^*_- (F_1, F_3 )+c^*_+(F_0, F_1)>0.$
\end{lemma}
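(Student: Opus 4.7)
\medskip

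\noindent \textbf{Proof plan.}
The plan is to argue that each of the two spreading speeds on the right is already strictly positive, by reducing the restricted map $Q$ on each order interval to a scalar Ricker recursion and invoking the classical spreading speed theory of Hsu--Zhao, Liang--Zhao and Weinberger et al. The symmetry hypothesis (H3) will then collapse ``leftward'' and ``rightward'' speeds, so the explicit monostable formula applies in both cases.

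First I would consider the restriction of $Q$ to $\mathcal{C}_{[F_0,F_1]}$. Since $F_0$ and $F_1$ share the same second coordinate $0$, the definition of $Q_2$ in \eqref{8} shows that $\psi\equiv 0$ is invariant, so the dynamics collapse to the scalar recursion
\begin{equation*}
\phi_{n+1}(t)=1-\int_{\mathbb{R}}(1-\phi_n(y))e^{r_1\phi_n(y)}l_1(t-y)\,dy.
\end{equation*}
Setting $W_n=1-\phi_n$ converts this into the Ricker recursion
\begin{equation*}
W_{n+1}(t)=\int_{\mathbb{R}} W_n(y)\,e^{r_1(1-W_n(y))}\,l_1(t-y)\,dy,
\end{equation*}
whose nonlinearity $w\mapsto w\,e^{r_1(1-w)}$ is monotone increasing on $[0,1]$ because $r_1\in(0,1)$, with $W=0$ unstable and $W=1$ stable, and whose linearization at $0$ is the KPP bound. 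Together with (H2), the hypotheses of the monostable spreading speed theory hold, so by Hsu--Zhao, Liang--Zhao and Weinberger et al. the rightward and leftward speeds exist and are equal (by (H3)) to
\begin{equation*}
c^\ast(r_1,l_1)=\inf_{\lambda>0}\frac{r_1+\log M_1(\lambda)}{\lambda},\qquad M_1(\lambda)=\int_{\mathbb{R}} l_1(y)e^{\lambda y}\,dy.
\end{equation*}
Because $M_1(\lambda)\ge 1$ for every $\lambda$ (symmetry of $l_1$ gives $M_1(\lambda)=\int l_1(y)\cosh(\lambda y)\,dy\ge 1$) and $r_1>0$, the numerator is bounded below by $r_1>0$, so the infimum is strictly positive. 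Thus $c^\ast_+(F_0,F_1)=c^\ast(r_1,l_1)>0$.

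I would then treat $\mathcal{C}_{[F_1,F_3]}$ in exactly the same way. The endpoints $F_1$ and $F_3$ share the first coordinate $1$, and $Q_1$ in \eqref{8} shows that $\phi\equiv 1$ is invariant since the factor $(1-\phi)$ vanishes. The remaining scalar recursion becomes
\begin{equation*}
\psi_{n+1}(t)=\int_{\mathbb{R}}\psi_n(y)\,e^{r_2(1-a_2-\psi_n(y)+a_2)}\,l_2(t-y)\,dy=\int_{\mathbb{R}}\psi_n(y)\,e^{r_2(1-\psi_n(y))}\,l_2(t-y)\,dy,
\end{equation*}
i.e.\ the Ricker recursion with parameter $r_2$ and kernel $l_2$. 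Repeating the analysis of the previous paragraph, with (H3) again identifying leftward and rightward speeds, I obtain
\begin{equation*}
c^\ast_-(F_1,F_3)=\inf_{\lambda>0}\frac{r_2+\log M_2(\lambda)}{\lambda}>0.
\end{equation*}
Adding the two positive quantities yields $c^\ast_-(F_1,F_3)+c^\ast_+(F_0,F_1)>0$, which is (A6) for $I=F_1$.

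I do not expect a genuine obstacle in this argument; the content is essentially bookkeeping. The only two points that deserve care are (i) verifying that the reduced scalar map really fits the framework of Liang--Zhao (monotonicity of $w\,e^{r(1-w)}$ on $[0,1]$ forces the use of $r_i\le 1$, which is exactly (H1)), and (ii) making sure the explicit formula for the linear speed is positive, which is where the strict inequalities $r_1,r_2>0$ and the normalization $\int l_i=1$ combine. The parallel case $I=F_2$ will have to be handled in a separate lemma, but the present statement only concerns $I=F_1$.
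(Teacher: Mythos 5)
Your reduction is essentially the paper's proof: on $\mathcal{C}_{[F_0,F_1]}$ the second component is frozen at $0$ and the substitution $W=1-\phi$ yields the scalar Ricker recursion with parameter $r_1$ and kernel $l_1$, while on $\mathcal{C}_{[F_1,F_3]}$ the first component is frozen at $1$ and one gets the Ricker recursion with $r_2$ and $l_2$; the paper then quotes Hsu--Zhao for the speed formula and Liang--Zhao, Lemma 3.8, for its positivity, exactly as you do. The only step where your write-up is too quick is the final positivity claim: from $M_i(\lambda)\ge 1$ you conclude the infimum is positive because ``the numerator is bounded below by $r_i$,'' but $\inf_{\lambda>0} r_i/\lambda=0$, so that bound alone proves nothing. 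You need the additional observation that $l_i$ is a genuine symmetric density, so some mass sits at $|y|\ge\delta>0$, whence $M_i(\lambda)\ge \tfrac{m}{2}e^{\lambda\delta}$ and $\bigl(r_i+\log M_i(\lambda)\bigr)/\lambda\to$ a limit $\ge\delta>0$ as $\lambda\to\infty$; combined with the blow-up as $\lambda\to 0^+$ this gives a strictly positive infimum (this is precisely the content of the Liang--Zhao lemma the paper cites).
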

\begin{proof}
To compute $c^*_- (F_1, F_3 ),$ we consider the spreading speed of
\begin{equation}\label{12}
p_{n+1}(x)=\int_{\mathbb{R}}
p_{n}(y)e^{r_{2}(1-p_{n}(y))}l_{2}(x-y)dy.
\end{equation}
By (H1)-(H3) and Hsu and Zhao \cite[Theorem 2.1]{hsuzhao}, then $c^*_- (F_1, F_3 )$ equals to the spreading speed of $q_n(x)$ defined by \eqref{12} if the initial value $p_0(x)$ is uniformly continuous and admits nonempty compact support such that
\[
0\le p_0(x)\le 1, x\in\mathbb{R},
\]
we then have
\[
c^*_- (F_1, F_3 )=\inf_{\mu >0}\frac{\ln (e^{r_2}\int_{\mathbb{R}}e^{\mu y} l_2(y)dy)}{\mu},
\]
which implies  that $c^*_- (F_1, F_3 )>0$ by (H2) and Liang and Zhao \cite[Lemma 3.8]{liang}.

To establish $c^*_+(F_0, F_1),$ define
\begin{equation}\label{10}
q_{n+1}(x)=1-\int_{\mathbb{R}}
(1-q_{n}(y))e^{r_{1}q_{n}(y)}l_{1}(x-y)dy.
\end{equation}

Let $w_n(x)=1-q_n(x),$ then \eqref{10} becomes
\begin{equation}\label{10}
w_{n+1}(x)=\int_{\mathbb{R}}
w_{n}(y)e^{r_{1}(1-w_{n}(y))}l_{1}(x-y)dy.
\end{equation}
Then $c^*_+(F_0, F_1)$ equals to the spreading speed of \eqref{10} when $w_0(x)$ admits nonempty compact support and is uniformly continuous such that
\[
0\le w_0(x)\le 1, x\in \mathbb{R},
\]
and so
\[
c^*_+(F_0, F_1)=\inf_{\mu >0}\frac{\ln (e^{r_1}\int_{\mathbb{R}}e^{\mu y} l_1(y)dy)}{\mu}>0.
\]

The proof is complete.
\end{proof}

\begin{lemma}\label{le6}
$c^*_- (F_2, F_3 )+c^*_+(F_0, F_2)>0.$
\end{lemma}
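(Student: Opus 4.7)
The plan is to adapt the argument used for Lemma \ref{le5}, but since on each of $[F_0,F_2]$ and $[F_2,F_3]$ neither coordinate is forced to be constant, we cannot reduce to a single scalar recursion in the same way. Instead, I would linearize the two cooperative two-dimensional subsystems at their common unstable endpoint $F_2$ and apply the spreading-speed theory for cooperative recursions of Hsu and Zhao \cite{hsuzhao}, Liang and Zhao \cite{liang} and Weinberger et al.\ \cite{weinberger}.

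First, I would bring each subsystem into cooperative monotone form with the unstable state placed at the origin. On $[F_2,F_3]$ I translate by $(u,v)=(U-(1-k_1),V-k_2)$; on $[F_0,F_2]$ I reflect, setting $(\tilde u,\tilde v)=((1-k_1)-U,\,k_2-V)$, which flips the interval so that the origin again corresponds to $F_2$. A direct differentiation---using $r_1,r_2\in(0,1)$, $k_1,k_2\in(0,1)$, and the identities $1-k_1=a_1k_2$ and $1-a_2k_1=k_2$---shows that both transformed recursions are cooperative in the new nonnegative variables. The instability of $F_2$ and the stability of $F_0$ and $F_3$ already noted in Lemma \ref{le4} then give that each subsystem is monostable in the cooperative sense.

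Next, linearizing both transformed recursions at the origin and taking the bilateral Laplace transform yields the same matrix
\[
B_\mu=\begin{pmatrix}(1-r_1 k_1)\hat l_1(\mu) & r_1 a_1 k_1\,\hat l_1(\mu)\\ r_2 a_2 k_2\,\hat l_2(\mu) & (1-r_2 k_2)\hat l_2(\mu)\end{pmatrix},\qquad \hat l_i(\mu)=\int_{\mathbb R}e^{\mu y}l_i(y)\,dy,
\]
which is entrywise positive for every $\mu\in\mathbb R$ by (H2); let $\lambda(\mu)$ denote its Perron--Frobenius eigenvalue. A short computation gives $p(1)=r_1 r_2 k_1 k_2(1-a_1 a_2)<0$ for the characteristic polynomial of $B_0$, so $\lambda(0)>1$. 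By (H3), $\hat l_i(\mu)=\int \cosh(\mu y)l_i(y)\,dy\ge 1$, so $B_\mu$ dominates $B_0$ entrywise and hence $\lambda(\mu)\ge\lambda(0)>1$ for all $\mu>0$.

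Finally, by the linear lower bound for cooperative monotone recursions in \cite{hsuzhao,liang,weinberger}, the spreading speed of each subsystem satisfies $c^*\ge\inf_{\mu>0}\tfrac{\ln \lambda(\mu)}{\mu}$, and the same argument as in \cite[Lemma 3.8]{liang} already used in Lemma \ref{le5} shows that this infimum is strictly positive. Because the kernels are symmetric by (H3), the leftward and rightward speeds of any subsystem coincide, so $c^*_-(F_2,F_3)$ and $c^*_+(F_0,F_2)$ are both bounded below by the common positive number $c^*$, yielding $c^*_-(F_2,F_3)+c^*_+(F_0,F_2)\ge 2c^*>0$. The main obstacle I anticipate is the verification that the transformed cooperative recursions satisfy the principal-eigenfunction/subhomogeneity hypotheses needed to invoke the cited spreading-speed theorems; once those are in place, the positivity and the symmetry conclusions become routine.
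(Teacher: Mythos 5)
Your proposal is correct and follows essentially the same route as the paper: translate (and, on $[F_0,F_2]$, reflect) so that $F_2$ sits at the origin of a cooperative monostable subsystem, linearize there, and bound the spreading speed below by $\inf_{\mu>0}\ln\lambda(B_\mu)/\mu$ via Weinberger et al. The only real difference is how $\lambda(B_\mu)>1$ is obtained: the paper computes the sign of a determinant of the form $\det\bigl(\hat l_1(\mu)I-B_\mu\bigr)$ directly (under a harmless WLOG ordering of $\hat l_1(\mu)$ and $\hat l_2(\mu)$), whereas you combine $\lambda(B_0)>1$ with the entrywise monotonicity of the Perron root and $\hat l_i(\mu)=\int_{\mathbb{R}}\cosh(\mu y)l_i(y)\,dy\ge 1$; your version is slightly cleaner and reaches the same conclusion. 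Incidentally, your $B_\mu$, with $\hat l_1(\mu)$ throughout the first row and $\hat l_2(\mu)$ throughout the second, is the correct linearization of \eqref{11}; the matrix displayed in the paper has the kernels in the two off-diagonal entries interchanged, a slip that affects neither the determinant computation nor the conclusion.
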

\begin{proof}
We first consider $c^*_- (F_2, F_3 ),$ let
\[
p_n(x)=U_n(x)-(1-k_1), q_n(x)=V_n(x)-k_2,
\]
then \eqref{4} leads to
\begin{equation}\label{11}
\begin{cases}
p_{n+1}(x)=k_1+\int_{\mathbb{R}}(p_n(y)-k_1)e^{r_1(p_n(y)-a_1q_n(y))}l_1(x-y)dy,\\
q_{n+1}(x)=-k_2+\int_{\mathbb{R}}(q_n(y)+k_2)e^{r_2(a_2p_n(y)-q_n(y))}l_2(x-y)dy.
\end{cases}
\end{equation}
Consider the corresponding initial value problem of \eqref{11} with
\[
0\le p_0(x)\le k_1, 0\le q_0(x)\le 1-k_2, x\in\mathbb{R},
\]
in which $p_0(x),q_0(x)$ are uniformly continuous and admit nonempty compact supports. If $0\le u\le k_1, 0\le v\le 1-k_2,$ then
\[
0\le k_1+ (u-k_1)e^{r_1(u-a_1v)}\le k_1, 0\le -k_2 +(v+k_2)e^{r_2(a_2u-v)}\le 1-k_2
\]
and both of them are monotone increasing in $u\in [0,k_1], v\in [0,1-k_2].$ Using the comparison principle of monotone semiflows, we obtain $(p_n(x),q_n(x))\in \mathcal{C},n\in \mathbb{N}$ such that
\[
0\le p_n(x)\le k_1, 0\le q_n(x)\le 1-k_2, x\in\mathbb{R},n\in\mathbb{N}.
\]

For $\mu \ge 0,$  define
\[
B_{\mu }=\left[
\begin{array}{cc}
(1-r_{1}k_{1})\int_{\mathbb{R}}e^{\mu y}l_{1}(y)dy & a_{1}r_{1}k_{1}\int_{%
\mathbb{R}}e^{\mu y}l_{2}(y)dy \\
a_{2}r_{2}k_{2}\int_{\mathbb{R}}e^{\mu y}l_{1}(y)dy & (1-r_{2}k_{2})\int_{%
\mathbb{R}}e^{\mu y}l_{2}(y)dy%
\end{array}%
\right].
\]%
We now consider the principle eigenvalue of $B_{\mu },$ denoted by $\lambda
\left( B_{\mu }\right) .$ Without loss of generality, we assume that
\[
\int_{\mathbb{R}}e^{\mu y}l_{1}(y)dy\leq \int_{\mathbb{R}}e^{\mu
y}l_{2}(y)dy,
\]%
then%
\begin{eqnarray*}
&&\left\vert
\begin{array}{cc}
\int_{\mathbb{R}}e^{\mu y}l_{1}(y)dy-(1-r_{1}k_{1})\int_{\mathbb{R}}e^{\mu
y}l_{1}(y)dy & -a_{1}r_{1}k_{1}\int_{\mathbb{R}}e^{\mu y}l_{2}(y)dy \\
-a_{2}r_{2}k_{2}\int_{\mathbb{R}}e^{\mu y}l_{1}(y)dy & \int_{\mathbb{R}%
}e^{\mu y}l_{1}(y)dy-(1-r_{2}k_{2})\int_{\mathbb{R}}e^{\mu y}l_{2}(y)dy%
\end{array}%
\right\vert  \\
&=&r_{1}k_{1}\int_{\mathbb{R}}e^{\mu y}l_{1}(y)dy\left\vert
\begin{array}{cc}
1 & -a_{1}\int_{\mathbb{R}}e^{\mu y}l_{2}(y)dy \\
-a_{2}r_{2}k_{2} & \int_{\mathbb{R}}e^{\mu y}l_{1}(y)dy-(1-r_{2}k_{2})\int_{%
\mathbb{R}}e^{\mu y}l_{2}(y)dy%
\end{array}%
\right\vert  \\
&=&r_{1}k_{1}\int_{\mathbb{R}}e^{\mu y}l_{1}(y)dy\left[ \int_{\mathbb{R}%
}e^{\mu y}l_{1}(y)dy-\int_{\mathbb{R}}e^{\mu
y}l_{2}(y)dy+(1-a_{1}a_{2})r_{2}k_{2}\int_{\mathbb{R}}e^{\mu y}l_{2}(y)dy%
\right]  \\
&\leq &(1-a_{1}a_{2})r_{2}k_{2}r_{1}k_{1}\int_{\mathbb{R}}e^{\mu
y}l_{1}(y)dy\int_{\mathbb{R}}e^{\mu y}l_{2}(y)dy \\
&<&0,
\end{eqnarray*}%
which implies that
\[
\lambda \left( B_{\mu }\right) > \int_{\mathbb{R}}e^{\mu
y}l_{1}(y)dy>1.
\]
At the same time, it is clear that
\[
\lambda \left( B_{0 }\right) >1.
\]

By what we have done, we obtain that
\[
\inf_{\mu >0}\frac{\ln (\lambda (B_{\mu}))}{\mu}>0,
\]
and so
\[
c^*_- (F_2, F_3 ) >0
\]
by Weinberger et al. \cite[Lemma 3.1]{weinberger}.

Similarly, we can prove that $c^*_+(F_0, F_2)>0.$ The proof is complete.
\end{proof}

Using Fang and Zhao \cite[Theorem 3.1]{fang}, we have proved Theorem \ref{th1}.


\begin{thebibliography}{99}\setlength{\itemsep}{0mm}\linespread{1}\selectfont
\bibitem{aron1} D.G. Aronson, H.F. Weinberger:  Nonlinear diffusion in
population genetics, combustion, and nerve pulse propagation, In:
\emph{Partial Differential Equations and Related Topics} (Ed. by
J.A. Goldstein), Lecture Notes in Mathematics, Vol. 446, pp. 5-49,
Springer, Berlin, 1975.

\bibitem{bates2} P.W. Bates, A. Chmaj, A discrete convolution model for phase
transitions, \emph{Arch. Ration. Mech. Anal.}, \textbf{150} (1999),
281-305.

\bibitem{chen} X. Chen, Existence, uniqueness and asymptotic
stability of traveling waves in non-local evolution equation,
\emph{Adv. Differential Equations}, \textbf{2} (1997), 125-160.

\bibitem{cush}  J. M. Cushing,  S. Levarge, N.  Chitnis and S. M.  Henson,
Some discrete competition models and the competitive exclusion
principle,  \emph{J. Diff. Eqns.  Appl.,} \textbf{10} (2004),
1139-1151.

\bibitem{fang} J. Fang, X. Zhao, Bistable travelling waves for monotone semiflows with application, http://arxiv.org/abs/1102.4556v1.

\bibitem{fife} P.C. Fife, J.B. McLeod, The approach of solutions of
nonlinear diffusion equations to travelling front solutions, \emph{Arch.
Ration. Mech. Anal.}, \textbf{65} (1977), 335-361.

\bibitem{huang} W. Huang, Uniqueness of the bistable traveling wave
for mutualist species, \emph{J. Dynam. Diff. Eqns.}, \textbf{13}
(2001), 147-183.

\bibitem{hsuzhao}  S. B. Hsu and X. Q. Zhao, Spreading speeds and traveling waves
for nonmonotone integrodifference equations, \emph{SIAM J. Math.
Anal.,} \textbf{40} (2008), 776-789.

\bibitem{kang} Y. Kang, H. Smith, Global dynamics of a discrete two-species lottery-Ricker competition model, \emph{J. Biol. Dyn.,} \textbf{6} (2012),   358-376.

\bibitem{keener} J.P. Keener, Propagation and its failure to coupled systems of
discrete excitable cells, \emph{SIAM J. Appl. Math.}, \textbf{47}
(1987), 556-572.

\bibitem{lijia} J. Li, B. Song, X. Wang,  An extended discrete Ricker population model with Allee effects, \emph{J. Difference Equ. Appl.,} \textbf{13} (2007),  309-321.

\bibitem{li} K. Li, X. Li,  Asymptotic behavior and uniqueness of traveling wave solutions in Ricker competition system, \emph{J. Math. Anal. Appl.,} \textbf{389} (2012),  486-497.

\bibitem{liang} X. Liang, X.Q. Zhao, Asymptotic speeds of spread and
traveling waves for monotone semiflows with applications,
\emph{Comm. Pure Appl. Math.,} \textbf{60} (2007), 1-40.

\bibitem{mis} K. Mischaikow, V. Hutson, Traveling waves for
mutualist species, \emph{SIAM J. Math. Anal.}, \textbf{24} (1993),
987-1008.

\bibitem{saker} R.J. Sacker,  A note on periodic Ricker maps, \emph{J. Difference Equ. Appl.,} \textbf{13} (2007), 89-92.

\bibitem{sun} G.Q. Sun, G.  Zhang, Z. Jin,  Dynamic behavior of a discrete modified Ricker \& Beverton-Holt model, \emph{Comput. Math. Appl.,} \textbf{57} (2009),  1400-1412.


\bibitem{volpert}  A.I. Volpert, V.A. Volpert, V.A. Volpert,
Traveling Wave Solutions of Parabolic Systems, Translations of
Mathematical Monographs  140, AMS, Providence, Rhode Island, 1994.

\bibitem{wc} H. Wang, C. Castillo-Chavez, Spreading speeds and traveling waves for non-cooperative integro-difference systems, \emph{Discrete Contin. Dyn. Syst. Ser. B,} \textbf{17} (2012), 2243-2266.

\bibitem{weinberger} H.F. Weinberger, M.A. Lewis, B. Li,
Analysis of linear determinacy for spread in cooperative models,
\emph{J. Math. Biol.,}  \textbf{45} (2002), 183-218.

\bibitem{y} H. Yagisita, Existence of travaling wave solutions for a nonlocal bistable equation: an abstract approach, \emph{Publ. RIAM, Kyoto Univ.,} \textbf{45} (2009), 955-979.

\bibitem{zhang1}  Y. Zhang, X.Q. Zhao, Bistable travelling waves in competitive recursion systems, \emph{J. Differential Equations,} \textbf{252} (2012) 2630-2647.

\bibitem{zhang2} Y. Zhang, X.Q. Zhao, Bistable travelling waves for a reaction and diffusion model with seasonal succession, \emph{Nonlinearity}, \textbf{26} (2013) 691-709.
\end{thebibliography}
\end{document}